\newcommand\co{{\rm co}}
\newcommand\Hypergeo{{\rm Hypergeo}}
\newcommand\indicator{{\mathbb I}}
\newcommand\given{\, \vert \, }
\newcommand\E{{\mathbb E}}
\newtheorem{theorem}{Theorem}[section]
\newtheorem{corollary}{Corollary}[section]
\newtheorem{lemma}{Lemma}[section]
\newtheorem{remark}{Remark}[section]
\newcommand\prob{{\mathbb P}}
\begin{document}
\begin{center}
{\huge \bf Random multi-hooking networks}

\bigskip
{\large Kiran R. Bhutani\footnote{Department of Mathematics, The Catholic University of America, Washington, D.C. 20064, U.S.A.; Email: \url{bhutani@cua.edu} } \qquad Ravi Kalpathy\footnote{Department of Mathematics, The Catholic University of America, Washington, D.C. 20064, U.S.A.; Email: \url{kalpathy@cua.edu} } \qquad Hosam Mahmoud\footnote{Department of Statistics, The George Washington University, Washington, D.C. 20052, U.S.A.; Email: \url{hosam@gwu.edu}}}

\medskip
\end{center}
\bigskip\noindent
\section*{Abstract} 
We introduce a broad class of multi-hooking networks, 
wherein multiple copies of  
a seed are hooked at each step 
at random locations, and 
the number of copies 
follows a predetermined building sequence of numbers.

We analyze the degree profile in random multi-hooking networks by tracking two kinds of node degrees---the local average degree of a specific node over time and the global overall average degree in the graph. The former experiences phases and the latter is 
invariant with respect to the type of building sequence and is somewhat similar to the average degree in the initial seed. We also discuss the expected number of nodes of the smallest degree. 

Additionally, we study distances in the network through the lens of the average total path length, the average depth of a node, the eccentricity of a node, and the diameter of the graph. 

\bigskip
\noindent{\bf AMS subject classifications:} Primary:  05C82, 
90B15; 
{\bf }Secondary: 60C05, 
05C12. 

\medskip\noindent
{\bf Keywords:} Hooking networks, random graph,
degree profile,  recurrence.

\section{Introduction}
Trees have long been in the focus of research on random graphs. 
The classic types, such as those that appear in data structures~\cite{Brown,Drmota,Knuth,Mah92} and
digital processing~\cite{Briandais,Fredkin, Szpankowski}, grow incrementally, one node at a time. In more recent
times, authors considered more complex types of random graphs grown
by adjoining entire graphs to a growing
network~\cite{Bahrani,Bhutani1, ChenChen,Holmgren,Colin1,Colin2,Gittenberger,Mohan,profile,SP,Samorodnitsky,VanderHofstad}. We consider a growing network model in which the number of components attached at a stage follows a predetermined building sequence of numbers.

Societies and social networks grow 
and change over time in multiple random ways, which include growth patterns that add ``components'' at each step.  
Networks grown by adding components reflect
these dynamics better than networks evolving on single node additions. 
One can embed a 
graph in a predetermined growth structure leading to multiple scenarios of growing networks. 

In this paper, we develop a model where networks grow by hooking multiple copies
of the seed at multiple nodes of the growing network 
chosen in a random fashion
and study the theoretical and statistical properties of the networks so generated.
\section{The building sequence}
We assume that a network grows by attaching a number of components at each step to the existing structure, which starts with $\tau_0 \ge 2$ vertices. In the next subsection, we give a formal definition. Here, we only
say a word on the number of components added at each step.
After $n$ steps of growth, the number of components attached
to obtain the next network
is $k_n$, a
predetermined sequence of nonnegative numbers.
\bigskip\noindent
\subsection{Regularity conditions}
Let $\tau_0\ge 2$. This represents the number of nodes in a building
block (a seed). We grow the network by adding a number of copies
of the seed at places called latches. At each latch, a designated vertex
in the seed (called the hook) is fused with the latch. 
A formal definition of this process is given in the sequel.

We shall consider adding $k_{n}\ge 1$ copies of the seed to construct
the $(n+1)$st network,
under the following {\em regularity conditions}:
\begin{itemize}
\item [(R1)]  $k_n \le \tau_0 + (\tau_0 -1) \sum_{i=0}^{n-1} k_i$.
\item [(R2)]  $\lim_{n \to \infty}  \frac {k_n} {\sum_{i=0}^n k_i} = a \in [0, 1]$.
\item [(R3)]  $\lim_{n \to \infty} \frac {\tau_0} {k_n} = b \ge 0$.
\end{itemize}
A sequence of nonnegative integers $\{k_n\}_{n=0}^\infty$ satisfying 
(R1)--(R3) is called
a {\em building sequence}.
Condition (R1) is to guarantee the feasibility of choosing latches. At no point
in time does the process require more 
(distinct)
latches than the number of nodes
existing in the network.
Conditions (R2)--(R3) facilitate the existence of limits
for properties of interest and
expedite finding their values.
Note that $a=0$ and $b=0$ are both allowed.
For instance, for a constant sequence $k_n = k \in \mathbb N$, we have $a=0$,
and $b = \tau_0/ k >0$, whereas when $k_n = n+1$, 
we have both $a= 0$ and $b=0$.  

Regularity conditions (R1)--(R3) are not too restrictive and the class covered
by the investigation remains very broad. The examples
that come up in practice satisfy these regularity conditions. For example,
at one extreme
the building sequence $k_n=1$ builds networks of linear growth,
including trees. At the other extreme,
the case of equality in Condition (R1) builds a deterministic network
where the entire vertex set is chosen at each step (a take-all model); such extremal case
 grows the network exponentially fast.
\section{The multi-hooking network}
A network grows as follows.
We start with a connected {\em seed} graph $G_0$ 
with vertex set of size $\tau_0$ and edge set of size $\eta$.  
One of the vertices in the 
seed is designated as a {\em hook} (vertex $h$). When a copy of the seed is adjoined
to the network, it is the seed's hook that latches into that larger graph. The hooking
is accomplished by fusing together the hook and a
{\em latch} 
(vertex) chosen from the network.

 At step $n$, $k_{n-1}$ copies of the seed are hooked into the graph,
 $G_{n-1} = (V_{n-1}, {\cal E}_{n-1})$,
with vertex set $V_{n-1}$ and edge set ${\cal E}_{n-1}$,
  that exists at time $n-1$. To complete the $n$th hooking step,
we sample $k_{n-1}$ latches from the graph $G_{n-1}$. The selection
mechanism can take a number of forms, such as choosing distinct hooks
as opposed to allowing repetitions.

 We use the notation $|A|$ for the cardinality of a set $A$.
We consider a {\em uniform} model that selects $k_{n-1}$ {\em distinct}
nodes in the network, with all $|V_{n-1}| \choose k_{n-1}$ subsets being
equally likely. In the language of statistics, this boils down to
{\em sampling without replacement}.

Figure~\ref{Fig:network} illustrates  a seed and a network grown from it in three steps under the building sequence $k_n = n+1$. So, $G_1$ grows by choosing
a latch from $G_0$ (the starred node in $G_0$), 
$G_2$ grows by choosing the
two starred nodes from~$G_1$,
and 
$G_3$ grows by choosing the
three  
starred
nodes from~$G_2$.
The networks in Figure~\ref{Fig:network} have loops and multiple edges,
as we do not restrict the study to
{\em simple} graphs.
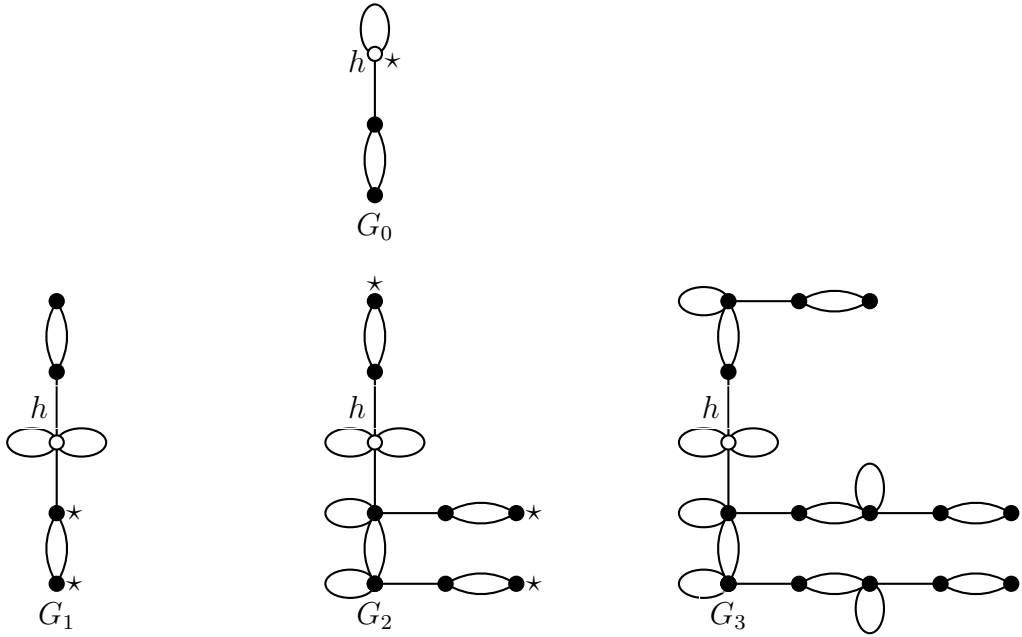
\begin{figure}[thb]
\begin{center}
\begin{tikzpicture}[scale=0.47]
\node[draw=white] at (-0.5, 14.8) {$h$};
\node[draw=white] at (0.5, 14.8) {$\star$};
\draw [thick] (0,15.7) ellipse (0.4 and 0.7);
\draw [thick] (0.7,4) ellipse (0.7 and 0.4);
\draw [thick] (-0.7,4) ellipse (0.7 and 0.4);
\draw [thick] (10.7,4) ellipse (0.7 and 0.4);
\draw [thick] (9.3,4) ellipse (0.7 and 0.4);
\draw [thick] (9.3,0) ellipse (0.7 and 0.4);
\draw [thick] (9.3,2) ellipse (0.7 and 0.4);
\node[draw=white] at (0, 10.1) {$G_0$};
\node[draw=white] at (-9, -0.9) {$G_1$};
\node[draw=white] at (0,  -0.9) {$G_2$};
\node[draw=white] at (10,  -0.9) {$G_3$};

\node[draw=white] at (-8.5, 0) {$\star$};
\node[draw=white] at (-8.5, 2) {$\star$};
\node[draw=white] at (4.5, 0) {$\star$};
\node[draw=white] at (4.5, 2) {$\star$};
\node[draw=white] at (0, 8.5) {$\star$};
\draw [thick, fill=black] (0,13) circle [radius=0.2];
\draw [thick, fill=black] (0,11) circle [radius=0.2];

\coordinate (A) at (0,15);
\coordinate (B) at (0,13);
\coordinate (C) at (0,11);
\draw [thick] (0,11) to [bend right] (0,13);
\draw [thick] (0,11) to [bend left] (0,13);
\draw [thick] (A)--(B);
\draw [thick, fill=white] (0,15) circle [radius=0.2];
\coordinate (A1) at (-9,0);
\coordinate (B1) at (-9,2);
\coordinate (C1) at (-9,4);
\coordinate (D1) at (-9,6);
\coordinate (E1) at (-9,8);
\draw [thick] (B1)--(D1);
\draw [thick] (-9,0) to [bend left] (-9,2);
\draw [thick] (-9,0) to [bend right] (-9,2);
\draw [thick] (-9,6) to [bend left] (-9,8);
\draw [thick] (-9,6) to [bend right] (-9,8);
\draw [thick, fill=black] (-9,8) circle [radius=0.2];
\draw [thick, fill=black] (-9,6) circle [radius=0.2];

\draw [thick, fill=black] (-9,2) circle [radius=0.2];
\draw [thick, fill=black] (-9,0) circle [radius=0.2];
\draw [thick] (-8.3,4) ellipse (0.7 and 0.4);
\draw [thick] (-9.7,4) ellipse (0.7 and 0.4);
\draw [thick, fill=white] (-9,4) circle [radius=0.2];
\node[draw=white] at (-9.5, 5) {$h$};
%
\coordinate (A2) at (0,0);
\coordinate (B2) at (0,2);
\coordinate (C2) at (0,4);
\coordinate (D2) at (0,6);
\coordinate (E2) at (0,8);
\coordinate (F2) at (2,0);
\coordinate (G2) at (2,2);
\draw [thick] (A2)--(F2);
\draw [thick] (B2)--(D2);
\draw [thick] (0,0) to [bend left] (0,2);
\draw [thick] (0,0) to [bend right] (0,2);
\draw [thick] (B2)--(G2);
\draw [thick] (0,6) to [bend right] (0,8);
\draw [thick] (0,6) to [bend left] (0,8);
\draw [thick] (2,0) to [bend left] (4,0);
\draw [thick] (2,0) to [bend right] (4,0);
\draw [thick] (2,2) to [bend left] (4,2);
\draw [thick] (2,2) to [bend right] (4,2);
\draw [thick, fill=black] (0,8) circle [radius=0.2];
\draw [thick, fill=black] (0,6) circle [radius=0.2];
\draw [thick, fill=white] (0,4) circle [radius=0.2];
\node[draw=white] at (-0.5, 5) {$h$};
%
\draw [thick, fill=black] (0,2) circle [radius=0.2];
\draw [thick, fill=black] (0,0) circle [radius=0.2];
\draw [thick, fill=black] (0,0) circle [radius=0.2];
\draw [thick, fill=black] (0,0) circle [radius=0.2];
\draw [thick, fill=black] (2,2) circle [radius=0.2];
\draw [thick, fill=black] (4,2) circle [radius=0.2];
\draw [thick, fill=black] (4,0) circle [radius=0.2];
\draw [thick, fill=black] (2,0) circle [radius=0.2];
\draw [thick] (-0.7,0) ellipse (0.7 and 0.4);
\draw [thick] (-0.7,2) ellipse (0.7 and 0.4);

\coordinate (A3) at (10,0);
\coordinate (B3) at (10,2);
\coordinate (C3) at (10,4);
\coordinate (D3) at (10,6);
\coordinate (E3) at (10,8);
\coordinate (F3) at (12,0);
\coordinate (G3) at (12,2);
\coordinate (H3) at (16,0);
\draw [thick] (A3)--(F3);
\draw [thick] (B3)--(D3);
\draw [thick] (12,0) to [bend left] (14,0);
\draw [thick] (12,0) to [bend right] (14,0);
\draw [thick] (B3)--(G3);
\draw [thick] (10,6) to [bend right] (10,8);
\draw [thick] (10,6) to [bend left] (10,8);
\draw [thick] (12,2) to [bend left] (14,2);
\draw [thick] (12,2) to [bend right] (14,2);
\draw [thick] (10,0) to [bend right] (10,2);
\draw [thick] (10,0) to [bend left] (10,2);
\draw [thick, fill=black] (10,8) circle [radius=0.2];
\draw [thick, fill=black] (10,6) circle [radius=0.2];
\draw [thick, fill=white] (10,4) circle [radius=0.2];
\node[draw=white] at (9.5, 5) {$h$};
%
\draw [thick, fill=black] (10,2) circle [radius=0.2];
\draw [thick, fill=black] (10,0) circle [radius=0.2];
\draw [thick, fill=black] (10,0) circle [radius=0.2];
\draw [thick, fill=black] (10,0) circle [radius=0.2];
\draw [thick, fill=black] (12,2) circle [radius=0.2];
\draw [thick, fill=black] (14,2) circle [radius=0.2];
\draw [thick, fill=black] (14,0) circle [radius=0.2];
\draw [thick, fill=black] (12,0) circle [radius=0.2];
\draw [thick, fill=black] (16,0) circle [radius=0.2];
\draw [thick, fill=black] (18,0) circle [radius=0.2];
\draw [thick] (16,0) to [bend left] (18,0);
\draw [thick] (16,0) to [bend right] (18,0);
\draw [thick] (14,0) to (16,0);
\draw [thick, fill=black] (16,2) circle [radius=0.2];
\draw [thick, fill=black] (18,2) circle [radius=0.2];
\draw [thick] (16,2) to [bend left] (18,2);
\draw [thick] (16,2) to [bend right] (18,2);
\draw [thick] (14,2) to (16,2);
\draw [thick] (14,2.7) ellipse (0.4 and 0.7);
\draw [thick] (14,-0.7) ellipse (0.4 and 0.7);
\draw [thick, fill=black] (12,8) circle [radius=0.2];
\draw [thick, fill=black] (14,8) circle [radius=0.2];
\draw [thick] (12,8) to [bend left] (14,8);
\draw [thick] (12,8) to [bend right] (14,8);
\draw [thick] (10,8) to (12,8););
\draw [thick] (9.3,8) ellipse (0.7 and 0.4);
\end{tikzpicture}
\end{center}
\caption{A seed (top) with a hook and three networks grown from it (second row) under
the building sequence $k_n=n+1$. The white vertices in the network $G_1$, $G_2$, and $G_3$ represent the reference vertex.}
\label{Fig:network}
\end{figure}
\subsection{Notation}
\label{Subsec:notation}
The notation $\Hypergeo(t, r, s)$ stands for the hypergeometric random variable associated with the random sampling of $s$ objects out of a total of $t$ objects, of which
$r$ objects are of a special type. So, the hypergeometric random variable counts the number of special objects in the sample.

It is customary to call the cardinality of the vertex set of a graph the {\em order
of the graph} and reserve the term {\em size of the graph to} the cardinality of the set of edges in the graph.
Let $V_n$ be the set of vertices of the graph~$G_n$, and ${\cal E}_n$ be the set
of edges of that graph.
Thus, the  seed $G_0 =(V_0, {\cal E}_0)$ is a connected graph with the set $V_0$ of vertices and
the set ${\cal E}_0$ of edges.

Let $\tau_n$ be the order of the graph at age $n$. Hence,
the cardinality
of the vertex set $V_0$ of the seed is $|V_0|=\tau_0$.
The $n$th hooking step adds $k_{n-1}$ copies of the seed at
$k_{n-1}$ distinct latches chosen uniformly at random from $G_{n-1}$. Each copy
contributes $\tau_0 -1$ new vertices to the network. The reason
for subtracting 1 is the absorption of the hook.
This gives the recurrence
\begin{equation}
\tau_n =  \tau_{n-1}  + k_{n-1} (\tau_0-1) .
\label{Eq:taun}
\end{equation}
Unwinding this recurrence,  we obtain
\begin{equation}
\tau_n = (\tau_0-1)\sum_{i=1}^n k_{i-1}  + \tau_0.
\label{Eq:taun2}
\end{equation}

We use the notation $\deg(v)$ to denote the degree of node $v$ in a given 
graph, and we set $h^* = \deg(h)$.
\subsection{Useful limits}
\label{Subsec:useful}
By the regularity conditions, we can argue from~(\ref{Eq:taun2})
that
$$\frac {\tau_n} {k_{n-1}} = (\tau_0-1)\frac 1 {k_{n-1}}\sum_{i=0}^{n-1} k_i  +
   \frac {\tau_0} {k_{n-1}} \to \frac {\tau_0-1} a+b =: \gamma.$$

Reorganize~(\ref{Eq:taun}) as
$$1 =  \frac {\tau_{n-1}} {\tau_n}  + \frac {k_{n-1}}{\tau_n} (\tau_0-1) .$$
to find the limit
$$  \frac {\tau_{n-1}} {\tau_n}  = 1 - \frac {k_{n-1}}{\tau_n} (\tau_0-1)
   \to 1- \frac{(\tau_0-1)} {\gamma}.$$
\section{A degree profile of the network}
\label{Sec:deg}
Various aspects of the degrees of nodes in a network are of interest in different contexts. For example, in the language of epidemiology, the degree of a node may be a useful representation of a highly infective person. From a health policy point of view, having knowledge about the degrees in conjunction with other graph parameters may help in identifying hot spots that trigger outbreaks and may be useful in controlling and mitigating the contagion. 
In the context of a social network, the degree of a node may represent the popularity and social skills of the person represented by the node. 

Equally interesting are the global overall average degree in the entire graph (where we look at all the nodes), the local degree of a specific node during its temporal evolution, and the number of nodes of the smallest degree. We deal with the average behavior of each of these in a separate subsection. The different aspects of the degree complete a profile of the graph. 
\subsection{Evolution of the degree of a specific node}
Suppose a node appears for the first time at step $j$. What will become of
its degree at step $n$? At step $j$, several copies are added. To avoid a heavy
notation identifying the time of appearance $j$, the copy number, which node within the copy to be tracked, and $n$, we use a simpler notation that needs 
only $j$ and $n$, for after all nodes of the same degree in the seed have the same distribution over time.
\begin{theorem}
\label{Theo:deg}
Suppose $\{k_n\}_{n=0}^\infty$ is a building sequence of the family of 
graphs $\{G_n\}_{n=1}^\infty$. 
Let $X_{j:n}$ be the degree
of a node at time $n$ that had appeared for the first time at step $j$. If initially its degree (in the seed) is $\delta$, then we have
 $$\E[X_{j:n}]   =\delta + h^*  \sum_{i=j}^{n-1} \frac {k_i} {\tau_i} =   \frac {h^*a} {(1-a)(\tau_0 -1) + ab}(n-j) + o(n-j) + O(1).$$
\end {theorem}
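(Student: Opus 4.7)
The plan is to split the proof into two clean steps: establish a one-step mean recurrence for the tracked node's degree, telescope to get the exact sum, and then evaluate that sum asymptotically via the limit of $k_i/\tau_i$.

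\textbf{Recurrence and telescoping.} Fix $i\ge j$ and condition on $G_i$, in which the tracked node $v$ already lives with degree $X_{j:i}$. In the transition $G_i\to G_{i+1}$, the latches form a uniformly random $k_i$-subset of the $\tau_i$ vertices of $G_i$. Because sampling is without replacement, $v$ is a latch at most once, and by uniform symmetry this happens with probability exactly $k_i/\tau_i$; if it does, the single hooking at $v$ contributes exactly $h^*=\deg(h)$ new edges incident to $v$, and no other event at this step alters its degree. Hence $\E[X_{j:i+1}\mid G_i]=X_{j:i}+h^* k_i/\tau_i$, and taking expectations, $\E[X_{j:i+1}]=\E[X_{j:i}]+h^* k_i/\tau_i$. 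Telescoping from $i=j$ to $i=n-1$ and using $\E[X_{j:j}]=\delta$ (the freshly born node inherits its seed degree, since the hook is absorbed into the latch) yields
$$\E[X_{j:n}]=\delta+h^*\sum_{i=j}^{n-1}\frac{k_i}{\tau_i},$$
which is the first equality claimed in the theorem.

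\textbf{Asymptotic evaluation.} To pass to the second equality I would compute $\lim_i k_i/\tau_i$ from~\eqref{Eq:taun2}. Dividing $\tau_i=(\tau_0-1)\sum_{s=0}^{i-1}k_s+\tau_0$ by $k_i$ gives
$$\frac{\tau_i}{k_i}=(\tau_0-1)\Bigl(\frac{\sum_{s=0}^{i}k_s}{k_i}-1\Bigr)+\frac{\tau_0}{k_i},$$
which, by (R2) and (R3), tends to $(\tau_0-1)(1-a)/a+b=((1-a)(\tau_0-1)+ab)/a$ when $a>0$, and to $\infty$ when $a=0$. In either case $k_i/\tau_i\to c:=a/((1-a)(\tau_0-1)+ab)$, with $c=0$ in the $a=0$ regime. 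A standard Ces\`aro/Stolz average then converts this pointwise limit into $\sum_{i=j}^{n-1}k_i/\tau_i=c(n-j)+o(n-j)$, and absorbing the constant $\delta$ into the $O(1)$ term produces the asserted asymptotic.

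\textbf{Main obstacle.} The proof is largely bookkeeping once the key probabilistic observation---that sampling latches without replacement makes the per-step increment of $v$'s degree an indicator scaled by $h^*$ with success probability $k_i/\tau_i$---is in hand. The mildly delicate piece is uniformity across all admissible regimes, in particular the boundary case $a=0$ (e.g.\ $k_n=n+1$, where $b=0$ as well): one must verify that $k_i/\tau_i\to 0$ there, so that the Ces\`aro passage legitimately returns a sublinear sum, correctly matching the vanishing leading coefficient in the stated formula.
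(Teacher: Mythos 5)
Your proposal is correct and follows essentially the same route as the paper: the same indicator-based recurrence $\E[X_{j:n}] = \E[X_{j:n-1}] + h^*\, k_{n-1}/\tau_{n-1}$ (the paper computes the selection probability via the hypergeometric ratio, you via symmetry of sampling without replacement), the same telescoping to the exact sum, and the same Ces\`aro-type passage to the asymptotic using the limit of $k_i/\tau_i$ under (R2)--(R3). Your explicit derivation of $\lim_i \tau_i/k_i = \bigl((1-a)(\tau_0-1)+ab\bigr)/a$, including the $a=0$ boundary case, is in fact somewhat more detailed than the paper's terse appeal to its ``useful limits'' subsection, but it is the same argument.
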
    
\begin{proof}
Suppose a node $v$ appears at time $j$ for the first time. So, it belongs to one 
of the copies adjoined to the graph at that time.   
As the graph evolves, in any single step the degree  
of $v$ can increase, if it is one of the nodes selected as latches in that step;
otherwise its degree stays put, and when it does increase, it goes up
by $h^* = \deg(h)$, the degree of the hook in the seed. This gives 
rise to a recurrence:
$$X_{j:n} = X_{j:n-1} + h^* \indicator_{n-1}(v), $$
where $\indicator_{n-1}(v)$ is an indicator of the event of choosing
$v$ among the $k_{n-1}$ latches of that step of growth. 
On average, we have 
$$\E[X_{j:n}] = \E[X_{j:n-1}] + h^* \frac {{\tau_{n-1}-1\choose k_{n-1}-1}} {{\tau_{n-1}\choose k_{n-1}}} = \E[X_{j:n-1}]  +h^*  \frac {k_{n-1}} {\tau_{n-1}}. $$
 
 Unwinding the recurrence, we obtain the exact average:
$$\E[X_{j:n}] =\delta + h^*  \sum_{i=j}^{n-1} \frac {k_i} {\tau_i}. $$
By the limits in Subsection~\ref{Subsec:useful},  we obtain
$$\E[X_{j:n}]  = O(1) + o(n-j) 
      + \frac {h^*a} {(1-a)(\tau_0 -1) + ab}(n-j).$$ 
\end{proof} 
\begin{remark} 
If $a=0$  the 
$\E[X_{j:n}]$ is only $o(n-j)$.   
\end{remark}
\begin{remark} 
Consider the case $a > 0$.
The average in Theorem~\ref{Theo:deg} indicates
that the degree of a specific node experiences phases. The degree of
a node in the early phase   
with $j = j(n) = o(n)$ grows linearly with its age in the network. When $j(n)\sim \rho n$, for $0 < \rho < 1 $, 
we still get a linear growth, but the coefficient of linearity 
is attenuated to $(1-\rho)\, \frac {h^*a} {(1-a)(\tau_0 -1) + ab}$.  
At $\rho=1$, we have $\E[X_{j:n}] = o(n)$.
\end{remark}
\begin{remark} 
If $a=0$, we can only assert that $\E[X_{j:n}]  =o(n-j) + \delta$. 
In this case,
 a finer analysis is needed to identify the
leading order of the average degree of a node that appears at time
$j$. For instance,
in the case of a tree grown from the complete graph $K_2$,
we have $\delta = 1, h^*=1, k_n =1$, and $
a=0$. The exact formula in this case yields 
$$\E[X_{j:n}] =1 +  \sum_{i=j}^{n-1} \frac 1 {i+2}=H_{n+1} -
             H_{j+1} + 1.$$
Whence, we have the phases
$$\E[X_{j:n}] 
    \sim \begin{cases}
           \ln n,                         &\mbox{if \ } j \mbox{\ is fixed};\\
           \ln \frac {n} {j(n)},     &\mbox{if \ } j(n)\to \infty \ \mbox {and \ } 
                  j(n) = o(n);\\
           \ln \frac 1 {\rho},     &\mbox{if \ } j(n) \sim \rho n, ~~ 0 < \rho < 1;\\
           1,   &\mbox{if \ } j(n) = n - o(n).  
     \end{cases}$$          
\end{remark}
\subsection{The overall average degree}
The main result about the overall average degree in the graph is developed in 
this section.
The result is expressed in terms of $\eta$, the number of edges in the seed graph.
\begin{theorem}
Suppose $\{k_n\}_{n=0}^\infty$ is a building sequence of the family of 
graphs $\{G_n\}_{n=1}^\infty$.  
Let $Y_n$ be the degree
of a randomly chosen node in the graph $G_n$ at age~$n$.
We have
 $$\lim_{n\to\infty}\E[Y_n]  =\frac {2\eta}{\tau_0-1}.$$
\end {theorem}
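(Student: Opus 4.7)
The plan is to reduce the statement to a ratio of two deterministic quantities via the handshake lemma. Since $\sum_{v \in V_n} \deg(v) = 2|{\cal E}_n|$, choosing a vertex uniformly at random from $V_n$ gives
$$\E[Y_n] = \frac{1}{\tau_n}\sum_{v \in V_n}\deg(v) = \frac{2\,|{\cal E}_n|}{\tau_n}.$$
Note that the randomness in the model comes only from the identities of the chosen latches, and this randomness does not influence the counts $|{\cal E}_n|$ and $\tau_n$. So the whole problem collapses to enumerating edges.

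I would establish a recurrence for $|{\cal E}_n|$. At step $n$, the $k_{n-1}$ copies of the seed that are attached each contribute their $\eta$ edges intact; the hook-latch fusion merely identifies two vertices and introduces or deletes no edges. This gives
$$|{\cal E}_n| = |{\cal E}_{n-1}| + \eta\, k_{n-1}, \qquad |{\cal E}_0|=\eta,$$
which unwinds (parallel to the derivation of (\ref{Eq:taun2})) to $|{\cal E}_n| = \eta\bigl(1 + \sum_{i=0}^{n-1}k_i\bigr)$. Substituting this together with $\tau_n = \tau_0 + (\tau_0-1)\sum_{i=0}^{n-1}k_i$ into the handshake identity yields
$$\E[Y_n] = \frac{2\eta\bigl(1+\sum_{i=0}^{n-1}k_i\bigr)}{\tau_0 + (\tau_0-1)\sum_{i=0}^{n-1}k_i}.$$
Because $k_n\ge 1$ for all $n$, the partial sums $S_n:=\sum_{i=0}^{n-1}k_i$ diverge with $n$. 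Dividing numerator and denominator by $S_n$ and passing to the limit then produces the claimed value $2\eta/(\tau_0-1)$.

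The main obstacle is conceptual rather than computational: one has to recognize at the outset that the average degree is a deterministic function of $n$, so that neither the distribution of the latches nor the finer regularity conditions (R2)--(R3) enter the argument. Once this observation is made, the proof is pure arithmetic, and in fact only the trivial lower bound $k_n\ge 1$ (which ensures $S_n\to\infty$) is needed. This also explains, \emph{a posteriori}, the opening remark that the overall average degree is ``somewhat similar to the average degree in the initial seed,'' since the seed itself has average degree exactly $2\eta/\tau_0$, which in the limit is shifted only by the contribution of the fused hook.
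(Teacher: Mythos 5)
Your proof is correct and follows essentially the same route as the paper's: the edge recurrence $|{\cal E}_n| = |{\cal E}_{n-1}| + \eta\, k_{n-1}$, the handshake lemma, and passing to the limit using the explicit formula for $\tau_n$. Your added observations---that $\E[Y_n]$ is deterministic and that only $k_n \ge 1$ (hence $S_n \to \infty$) is needed, not (R2)--(R3)---are accurate refinements of the same argument rather than a different approach.
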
 
\begin{proof}
Upon hooking 
$k_{n-1}$ copies of the seed to $k_{n-1}$ distinct nodes of  $G_{n-1} = (V_{n-1}, {\cal E}_{n-1})$,  we add $\eta k_{n-1}$ edges to the graph. 
Therefore, we have 
$$|{\cal E}_n| = |{\cal E}_{n-1}| + \eta k_{n-1}.$$
This recurrence has the solution
$$|{\cal E}_n| = \eta\Big(1+\sum_{i=0}^{n-1} k_i\Big).$$
Using the classical 
First Theorem of Graph Theory, we obtain
$$\sum_{v\in V_n} \deg (v) = 2{|\cal E}_n| = 2 \eta\Big(1+\sum_{i=0}^{n-1} k_i\Big).$$
Scaling the equation by $\tau_n$, 
we 
get     
$$\frac 1 {\tau_n} \sum_{v\in V_n}  \deg (v) = \frac {2\eta} {\tau_n}\Big(1+\sum_{i=0}^{n-1} k_i\Big).$$
Taking limits, and using equation (2), we obtain    
$$\lim_{n\to\infty} \E[Y_n]= 0+\lim_{n\to\infty}2\eta\, \frac{\sum_{i=0}^{n-1} k_i}
   {\tau_n} = \frac {2\eta}{\tau_0-1 }.$$  
\end{proof}
\begin{remark} The average degree in the seed is $2\eta/\tau_0$.
For any building sequence, the asymptotic
average degree in the graph is $2\eta / (\tau_0-1)$,
only slightly higher than the average degree in the initial seed. This should be anticipated because 
the additions introduce a number of copies of the seed, each of
which has the degree properties of the seed with the hook eliminated.   
\end{remark}
\subsection{Nodes of the smallest degree}
\label{Sec:smallestdegree}
We study only the nodes of the smallest degree.  Let $d^*$ be the smallest degree in
the seed.  Note that the smallest admissible
degree in the graph is~$d^*$. After the network grows, the smallest degree in it 
may be $d^*$ or higher.
Let $X_n$ be the number of nodes of degree~$d^*$ at time~$n$.
Thus, $X_0$ is the number
of nodes of degree~$d^*$ in the seed.
Later graphs can have more nodes of degree~$d^*$.
The seed in Figure~\ref{Fig:network} has $d^* = 2$, 
and $X_0 = 1,
X_1 = 2, X_2=3$, and $X_3 =3$.
\subsubsection{Stochastic recurrence}
In the evolution at step $n$, we hook $k_{n-1}$ copies of the seed to the graph~$G_{n-1}$.
Let $A_0$ be the event $\deg(h) = d^*$ and $\mathbb I_{A_0}$
be an indicator that assumes value 1, if $\deg(h) = d^*$,
otherwise, it assumes the value 0.
A latch of degree $d^*$ in the sample
will have a higher degree (namely, its degree goes up to $d^* + \deg(h)$) in $G_n$. 
So, we lose such vertices in the count of $X_n$. If the hook degree is $d^*$, every hooked
copy contributes only $X_0-1$ vertices of degree $d^*$.

For the case when $k_{n-1} = 1$ and the latch is $\ell$, 
the change from $X_{n-1}$ to~$X_n$ for the four cases can be seen as shown in the table below:

\vskip.4in
\begin{tabular}{c|c|c}
 & ${\deg(\ell)} = d^*$  & ${\deg(\ell)} \neq d^*$ \\
   \hline
 $ \deg(h) = d^*$ & $(X_{n-1} -1) + (X_0 -1)$ & $ X_{n-1} + (X_0-1) $ \\
    \hline
$ \deg(h) \neq d^*$ & $(X_{n-1} -1) + X_0$ & $ X_{n-1} + X_0$
\end{tabular}

\bigskip
\noindent Thus,
 for any value of $k_{n-1}$, the count $X_n$ therefore satisfies a (conditional) stochastic recurrence:
 \label{Eq:martingale}
\begin{align}
X_n &=  X_{n-1}  +  (X_0 - 1 -\mathbb I_{A_0})\, \Hypergeo(\tau_{n-1},
   X_{n-1}, k_{n-1})\nonumber \\
   &\qquad {} + (X_0 - \mathbb I_{A_0})\big(k_{n-1} -\Hypergeo(\tau_{n-1} , X_{n-1}, k_{n-1})\big).
 \label{Eq:Xn}
\end{align}
\subsubsection{The average proportion of nodes of degree $d^*$}
Take (conditional) expectation of~(\ref{Eq:Xn}) to get
\begin{align}
\E\big[X_n\given G_{n-1}\big]
&= X_{n-1}  +  (X_0 - 1 -\mathbb I_{A_0}) \frac {X_{n-1}}
    {\tau_{n-1}} k_{n-1} \nonumber \\
       &\qquad {} + (X_0 - \mathbb I_{A_0})\Big(k_{n-1} - \frac {X_{n-1}} {\tau_{n-1}} k_{n-1}\Big)
          \nonumber \\
   &= \Big(1   -\frac {k_{n-1}} {\tau_{n-1}} \Big) X_{n-1}
            + (X_0 - \mathbb I_{A_0}) k_{n-1}.
\label{Eq:EXn}
\end{align}
\begin{theorem}
\label{Theo:critical}
Suppose $\{k_n\}_{n=0}^\infty$ is a building sequence of the family of 
graphs $\{G_n\}_{n=1}^\infty$, starting from a seed with $X_0$ nodes of the smallest degree $d^*$. Let $X_n$ be the number  of vertices of this degree in the graph
after $n$ steps of evolution according to the building sequence.
We have
$$\E[X_n] = (X_0-\mathbb I_{A_0})
         \sum_{i=1}^n  k_{i-1}\prod_{j=i+1}^n
         \Big(1 - \frac {k_{j-1}} {\tau_{j-1}}\Big)
    +X_0\prod_{j=1}^n \Big(1 - \frac {k_{j-1}} {\tau_{j-1}}\Big). $$
Subsequently, the average proportion converges to a limit independent of the limits $a$ and $b$; namely we have the convergence
$$\E\Big[ \frac {X_n} {\tau_n}\Bigr] \to \frac {X_0-\mathbb I_{A_0}} {\tau_0}.$$
\end{theorem}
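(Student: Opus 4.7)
The plan is first to remove the conditioning. Taking total expectation in~(\ref{Eq:EXn}) yields the first-order linear recurrence
$$\E[X_n] = \Big(1 - \frac{k_{n-1}}{\tau_{n-1}}\Big)\E[X_{n-1}] + (X_0 - \mathbb I_{A_0})\,k_{n-1},$$
with initial value $\E[X_0] = X_0$. The closed form claimed in the theorem is just the variation-of-parameters solution of such a recurrence: unwinding from $n$ down to $0$, the initial term $X_0$ picks up the full product $\prod_{j=1}^{n}(1-k_{j-1}/\tau_{j-1})$, while an inhomogeneous increment $(X_0 - \mathbb I_{A_0})k_{i-1}$ introduced at step $i$ is multiplied only by the tail product $\prod_{j=i+1}^{n}(1-k_{j-1}/\tau_{j-1})$. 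This gives the stated formula immediately.

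For the asymptotic ratio, set $\mu_n = \E[X_n]/\tau_n$ and $L = (X_0 - \mathbb I_{A_0})/\tau_0$; the goal is to show $\mu_n \to L$. Dividing the recurrence by $\tau_n$ and using $\tau_n = \tau_{n-1} + (\tau_0-1)k_{n-1}$, I rewrite
$\bigl(1 - k_{n-1}/\tau_{n-1}\bigr)(\tau_{n-1}/\tau_n) = (\tau_{n-1}-k_{n-1})/\tau_n = 1 - \tau_0 k_{n-1}/\tau_n$, which converts the recurrence into
$$\mu_n = \Big(1 - \frac{\tau_0\, k_{n-1}}{\tau_n}\Big)\mu_{n-1} + (X_0 - \mathbb I_{A_0})\,\frac{k_{n-1}}{\tau_n}.$$
Because $L$ is the affine fixed point of this map, subtracting $L$ from both sides yields the clean multiplicative recurrence
$\mu_n - L = (\mu_{n-1} - L)\bigl(1 - \tau_0 k_{n-1}/\tau_n\bigr)$.

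Iterating from $\mu_0 - L = X_0/\tau_0 - (X_0-\mathbb I_{A_0})/\tau_0 = \mathbb I_{A_0}/\tau_0$ gives
$$\mu_n - L = \frac{\mathbb I_{A_0}}{\tau_0}\prod_{j=1}^{n}\Big(1 - \frac{\tau_0\, k_{j-1}}{\tau_j}\Big).$$
The key identity $\tau_{j-1} - k_{j-1} = \tau_j - \tau_0\, k_{j-1}$ lets me factor each term as $(\tau_{j-1}/\tau_j)\bigl(1 - k_{j-1}/\tau_{j-1}\bigr)$; the ratios $\tau_{j-1}/\tau_j$ then telescope to $\tau_0/\tau_n$, producing
$$\mu_n - L = \frac{\mathbb I_{A_0}}{\tau_n}\prod_{j=1}^{n}\Big(1 - \frac{k_{j-1}}{\tau_{j-1}}\Big).$$
By condition~(R1), each factor $1 - k_{j-1}/\tau_{j-1}$ lies in $[0,1]$, and $\tau_n \to \infty$ since $k_i \ge 1$; hence the right-hand side tends to $0$, independently of the values of $a$ and $b$.

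The proof is mostly algebraic once the two pivotal moves are identified: recognizing $L$ as the fixed point that linearizes the ratio recurrence, and using the identity $\tau_{j-1}-k_{j-1} = \tau_j - \tau_0 k_{j-1}$ to force the product to telescope against $\tau_n$. I do not anticipate any substantive obstacle; the only thing to be careful about is that the bound via (R1) is uniform in the building sequence, which is exactly what makes the limit independent of $a$ and $b$.
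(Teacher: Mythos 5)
Your proof is correct, and its first half (taking total expectation of the conditional recurrence and solving the resulting standard linear recurrence by variation of parameters) coincides with the paper's. The asymptotic half is organized differently. The paper splits the exact expression for $\E[X_n/\tau_n]$ into a sum part $c_n = \sum_{i=1}^n \frac{k_{i-1}}{\tau_n}\prod_{j=i+1}^n\bigl(1 - k_{j-1}/\tau_{j-1}\bigr)$ and a remainder $r_n$, derives a recurrence for $c_n$ alone, subtracts the fixed point $1/\tau_0$, and then passes to absolute values, obtaining the bound $|c_n - 1/\tau_0| \le 1/\tau_n$ by a contraction-type inequality and telescoping, with $r_n \to 0$ handled separately. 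You instead apply the same two pivotal moves---the identity $\tau_{j-1}-k_{j-1} = \tau_j - \tau_0 k_{j-1}$ and the telescoping of the ratios $\tau_{j-1}/\tau_j$---directly to the full normalized mean $\mu_n = \E[X_n]/\tau_n$, which in fact satisfies the very same recurrence as the paper's $c_n$ (only the initial condition differs: $\mu_0 = X_0/\tau_0$ versus $c_0 = 0$), and you never need to take absolute values or decompose. This buys you something slightly stronger than the theorem asserts: the exact error formula $\mu_n - L = \frac{\mathbb I_{A_0}}{\tau_n}\prod_{j=1}^n\bigl(1 - k_{j-1}/\tau_{j-1}\bigr)$, which shows that when $\deg(h) \ne d^*$ the expected proportion equals $X_0/\tau_0$ exactly for every $n$, and that otherwise the convergence rate is $O(1/\tau_n)$; the paper's route yields the same rate but only as a bound, at the cost of the two-piece decomposition. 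Both arguments use only regularity condition (R1) (which, by equation (\ref{Eq:taun2}), is precisely $k_n \le \tau_n$) together with $k_n \ge 1$, which is exactly why the limit is independent of $a$ and $b$.
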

\begin{proof}
Taking a double expectation of~(\ref{Eq:EXn}) yields
\begin{equation}
\E[X_n]  =   \Big(1 - \frac {k_{n-1}} {\tau_{n-1}}\Big) \, \E[X_{n-1}]
                  +  (X_0 - \mathbb I_{A_0})\, k_{n-1} .
\label{Eq:uncond}
\end{equation}
This recurrence equation
is of the standard linear form
\begin{equation}
y_n = g_n y_{n-1} + h_n,
\label{Eq:standard}
\end{equation}
with solution
\begin{equation}
y_n = \sum_{i=1}^n h_i\prod_{j=i+1}^n g_j + y_0 \prod_{j=1}^n g_j. \label{Eq:standardsol}
\end{equation}
So, the sought solution for the average of the number of nodes
of degree $d^*$ (for $n\ge 1$) is
$$\E[X_n] = (X_0-\mathbb I_{A_0}) \sum_{i=1}^n  k_{i-1}
   \prod_{j=i+1}^n \Big(1 - \frac {k_{j-1}} {\tau_{j-1}}\Big)
    + X_0 \prod_{j=1}^n \Big(1 - \frac {k_{j-1}}
    {\tau_{j-1}}\Big). $$
    The strategy for the asymptotic part of the statement is two-fold: We prove the existence of a limit (under any building sequence) for the proportion  from the exact solution. We then find the value of the limit from the recurrence under the mild regularity conditions imposed on the building sequence.

 First, express the expected proportion as
\begin{equation}
\E\Big[\frac {X_n}{\tau_n}\Big] =
 (X_0-\mathbb I_{A_0})\sum_{i=1}^n  \frac {k_{i-1}}
    {\tau_n}\prod_{j=i+1}^n \Big(1 - \frac {k_{j-1}} {\tau_{j-1}}\Big)
    + \frac {X_0} {\tau_n}
    \prod_{j=1}^n \Big(1 - \frac {k_{j-1}} {\tau_{j-1}}\Big); 
    \label{Eq:Kiran}
\end{equation}
at $i=n$, the first product does not exist, and is taken to be 1, as usual.
Let
  $$c_n = \sum_{i=1}^n \frac{k_{i-1}} {\tau_n}\prod_{j=i+1}^n  \Bigl( 1 -
       \frac {k_{j-1}}
             {\tau_{j-1}}\Bigr)
                 = \sum_{i=1}^{n-1} \frac{k_{i-1}} {\tau_n}\prod_{j=i+1}^n  \Bigl( 1 -
       \frac {k_{j-1}}
             {\tau_{j-1}}\Bigr)+ \frac {k_{n-1}}{\tau_n}; $$
We manipulate this to turn it into a recurrence as follows:
\begin{align*}
c_{n+1} 
     &= \sum_{i=1}^n \frac{k_{i-1}} {\tau_{n+1}}\prod_{j=i+1}^{n+1}  \Bigl( 1 -
            \frac {k_{j-1}}
             {\tau_{j-1}}\Bigr) +  \frac {k_n}{\tau_{n+1}}\\
     &= \frac {\tau_n}{\tau_{n+1}}\Big(1 - \frac {k_n}{\tau_n} \Big)\sum_{i=1}^n \frac{k_{i-1}} 
               {\tau_n}\prod_{j=i+1}^n  \Bigl( 1 -
            \frac {k_{j-1}}
             {\tau_{j-1}}\Bigr) +  \frac {k_n}{\tau_{n+1}}   \\
       &=  \Big(\frac {\tau_n - k_n}{\tau_{n+1}} \Big)
                     c_n+  \frac {k_n}{\tau_{n+1}}   \\    
       &=  \Big(\frac {\tau_{n+1} - (\tau_0-1)k_n - k_n}{\tau_{n+1}} \Big)
                     c_n+  \frac {k_n}{\tau_{n+1}}    \\    
       &= \Big(\frac {\tau_{n+1} - \tau_0 k_n }{\tau_{n+1}} \Big)
                     c_n+  \frac {k_n}{\tau_{n+1}}   
\end{align*}
Rearrange the recurrence in the form
$$c_{n+1} - \frac 1 {\tau_0} =c_n - \frac {\tau_0 k_n }{\tau_{n+1}}\, c_n
                     + \frac {k_n}{\tau_{n+1}}   -\frac 1 {\tau_0} = \Big(c_n - \frac 1 {\tau_0}\Big) \Big (\frac {\tau_{n+1}-\tau_0 k_n}{\tau_{n+1}}\Big),$$
leading to the inequality
\begin{align*}
\Big|c_{n+1} - \frac 1 {\tau_0} \Big|
&\le\Big|c_n - \frac 1 {\tau_0}\Big|  \,\Big |\frac {\tau_{n+1}-\tau_0 k_n + k_n}{\tau_{n+1}}\Big| \\
&= \Big|c_n - \frac 1 {\tau_0} \Big|  \,\frac {\tau_n}{\tau_{n+1}}\\
&\le \Big|c_{n-1} - \frac 1 {\tau_0} \Big| \,\frac {\tau_n}{\tau_{n+1}} \times \,\frac {\tau_{n-1}}{\tau_n}\\
&\ \ \vdots\\
&\le \Big|c_0 - \frac 1 {\tau_0} \Big| \,\frac {\tau_n}{\tau_{n+1}} \times \,\frac {\tau_{n-1}}{\tau_n}
    \times \cdots\times  \frac {\tau_0}{\tau_{1}}.
\end{align*}
Noting that the sum in $c_n$ is empty at $n=0$, we have $c_0 = 0$ and the bounds simplify to
$0 \le |c_n - 1 /\tau_0| \le 1/\tau_n$. So, both inferior and superior limits of
$|c_n - 1 /\tau_0|$ are equal to $0$, which furnishes the existence of a limit
for $c_n$ equal to $1/\tau_0$, too.

As for the remainder part
$$r_n := \frac {X_0}{\tau_n}\prod_{j=1}^n \Bigl( 1 - \frac {k_{j-1}}
             {\tau_{j-1}}\Bigr),
$$
in~(\ref{Eq:Kiran}), it
clearly converges to 0, as $\tau_n$ is increasing, and the product is bounded from above by~1.
Plugging in the limits $\lim_{n\to \infty} c_n= 1/\tau_0$ and $\lim_{n\to \infty} r_n = 0$ in~(\ref{Eq:Kiran}), we reach the conclusion that
$$\lim_{n\to \infty} \E\Bigl[ \frac {X_n} {\tau_n}\Bigr] = \frac {X_0- \mathbb I_{A_0}} {\tau_0}.$$
\end{proof}
\begin{remark}
In the case when the hook is not of the smallest degree $d^*$, we have
$\E[X_n/\tau_n] \to X_0/\tau_0$. The initial proportion
of nodes of the smallest degree in the seed is preserved on average
in 
larger subsequent graphs.
\end{remark}
\begin{remark}
In the case when the hook is  of the smallest degree $d^*$, we have
$\E[X_n/\tau_n] \to (X_0-1)/\tau_0$. The long-term proportion
of nodes of the smallest degree is less than the proportion
of nodes of degree $d^*$ in the seed.
\end{remark}
\begin{remark}
In the case when the hook is the only node of the smallest degree~$d^*$ in the seed, we have $X_0=1$, and
$\E[X_n/\tau_n] \to 0$, for all $n\ge 1$. Indeed, the degree~$d^*$ disappears
after the first latching at the initial hook and never reappears.
\end{remark}
\begin{remark}
The limit in Theorem~\ref{Theo:critical} is more than just an ultimate value in the take-all case. In this case, it is the actual value for each $n\ge0$, which can be seen from the recurrence. The only term that does not vanish is the last term in sum,  yielding $(X_0 - I_{A_0})  k_{n-1}/ \tau_n =  (X_0 - I_{A_0})  / \tau_0$.
\end{remark}
\section{Distances in the network}
We measure node distances in $G_n$ relative to
a reference point (vertex). We take the reference to be the hook of $G_0$.
We look at two (related) kinds of distances: The total path length and
the average distance in the graph.
Let the nodes of the $n$th graph be labeled with the numbers $1,2, \ldots, \tau_n$,
with 1 being reserved for the reference vertex
and the rest of the nodes are arbitrarily
assigned distinct numbers from the set $\{2, \ldots, \tau_n\}$.
The depth of a node in the network is its distance from the reference vertex (i.e.,
the length of the shortest path from the node to the reference vertex measured in the number of edges).
We denote the depth of the $i$th node in the $n$th network by $\Delta_{n,i}$
The {\em total
path length} is the sum of all depths; namely it is
$$T_n = \sum_{i=1}^{\tau_n} \Delta_{n,i}.$$
For instance, the networks $G_0, G_1, G_2$, and $G_3$ in Figure~\ref{Fig:network} have total path lengths 
$T_0=3, T_1 = 6$,  $T_2 = 18$, and $T_3 = 45$,
 respectively. 
\subsection{Average total path length}
As the network grows, at step $n$, a sample of size $k_{n-1}$ latches
is chosen from~$G_{n-1}$ to grow $G_{n-1}$ 
into $G_n$. Suppose these latches are 
$\ell_1, \ldots, \ell_{k_{n -1}}$
at depths
$d_1, \ldots, d_{k_{n-1}}$. In view of the absorption
of the hooks of the added graphs,
a copy's hook fused at the $j$th latch adds $\tau_0-1$
nodes, which appear in $G_n$ at depths equal to their distance from the hook of the copy
translated by an additional distance from the latch to the reference vertex. So, collectively,
the vertices of the copy hooked to $\ell_j$ increase the total path length by
$T_0   + (\tau_0-1)
d_j$.
We have a conditional recurrence:
$$\E\big[T_n \given G_{n-1}, d_1, \ldots, d_{k_{n-1}}\big]
             = T_{n-1} + \sum_{j=1}^{k_{n-1}} (T_0
                        +  (\tau_0-1)   d_j).$$
Averaging over the graphs and the choices of the latches within,
we get
\begin{equation}
\E[T_n] = \E[T_{n-1}] + T_0 k_{n-1} + (\tau_0-1) \sum_{j=1}^{k_{n-1}}
               \E[ \Delta_{n-1,\ell_j}].
\label{Eq:Ln}
\end{equation}
\begin{lemma}
\label{Lm:TPL}
\begin{align*}
 \sum_{j=1}^{k_{n-1}}
               \E[ \Delta_{n-1,\ell_j}]
           =  \frac {k_{n-1}} {\tau_{n-1}} \, \E[T_{n-1}].
\end{align*}
\end{lemma}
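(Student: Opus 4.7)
The plan is to exploit the fact that the $k_{n-1}$ latches are drawn as a uniform sample without replacement from the $\tau_{n-1}$ vertices of $G_{n-1}$. Under this mechanism, marginally each individual vertex $v \in V_{n-1}$ has probability exactly $k_{n-1}/\tau_{n-1}$ of being selected (an elementary fact about simple random sampling, equivalently computed as $\binom{\tau_{n-1}-1}{k_{n-1}-1}/\binom{\tau_{n-1}}{k_{n-1}}$). This is precisely the ratio that appears on the right-hand side, so the identity is really a statement of linearity of expectation over indicator variables.

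Concretely, I would first condition on $G_{n-1}$ (which fixes the depths $\Delta_{n-1,v}$ of all vertices) and rewrite the sum over the sampled latches as a sum over all vertices weighted by the indicator of being sampled:
\begin{equation*}
\sum_{j=1}^{k_{n-1}} \Delta_{n-1,\ell_j} \;=\; \sum_{v \in V_{n-1}} \Delta_{n-1,v}\, \indicator\{v \in \{\ell_1,\ldots,\ell_{k_{n-1}}\}\}.
\end{equation*}
Taking conditional expectation given $G_{n-1}$ and using $\prob(v \in \text{sample} \mid G_{n-1}) = k_{n-1}/\tau_{n-1}$ gives
\begin{equation*}
\E\Bigl[\sum_{j=1}^{k_{n-1}} \Delta_{n-1,\ell_j}\;\Bigm|\; G_{n-1}\Bigr] \;=\; \frac{k_{n-1}}{\tau_{n-1}}\sum_{v \in V_{n-1}} \Delta_{n-1,v} \;=\; \frac{k_{n-1}}{\tau_{n-1}}\, T_{n-1}.
\end{equation*}
Finally, taking expectation over $G_{n-1}$ (the factor $k_{n-1}/\tau_{n-1}$ is deterministic since $k_{n-1}$ is prescribed by the building sequence and $\tau_{n-1}$ is determined by~(\ref{Eq:taun2})) yields the claimed identity.

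There is essentially no obstacle here: the only subtlety worth flagging is that the $\ell_j$'s are not independent (sampling is without replacement), so one should avoid the temptation to treat the summands as i.i.d.\ copies; the indicator rewriting sidesteps this issue entirely by invoking only the marginal inclusion probability, which is the same for dependent and independent sampling. The determinism of $k_{n-1}/\tau_{n-1}$ under the multi-hooking model is what lets the factor come cleanly outside the outer expectation.
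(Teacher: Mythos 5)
Your proof is correct, but it takes a cleaner route than the paper's. The paper proves the identity by conditioning on which subset of latches was chosen, summing over all $\binom{\tau_{n-1}}{k_{n-1}}$ equally likely subsets, writing out the resulting sum of depths in full, and then collecting terms: each vertex of $G_{n-1}$ appears in exactly $\binom{\tau_{n-1}-1}{k_{n-1}-1}$ of those subsets, so the whole sum reorganizes into $\binom{\tau_{n-1}-1}{k_{n-1}-1}\,T_{n-1}$, and dividing by $\binom{\tau_{n-1}}{k_{n-1}}$ gives the factor $k_{n-1}/\tau_{n-1}$. Your argument rests on exactly the same underlying fact --- the symmetry of uniform sampling without replacement --- but packages it as inclusion indicators plus linearity of expectation, which eliminates the combinatorial bookkeeping entirely. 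What the paper's enumeration buys is a self-contained, explicit count that never has to mention marginal inclusion probabilities; what yours buys is brevity, a clear isolation of the only probabilistic input actually needed (each vertex lies in the sample with probability $k_{n-1}/\tau_{n-1}$), and generality, since your proof goes through verbatim for any sampling scheme with that marginal property, dependent or not --- a point you rightly flag. Your observation that the factor $k_{n-1}/\tau_{n-1}$ is deterministic (because $\tau_{n-1}$ is fixed by the building sequence via the recurrence for the graph order) is also the correct justification for pulling it outside the outer expectation, a step the paper performs implicitly.
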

\begin{proof}
Condition on the event $\ell_1 =i_1, \ldots,
                       \ell_{k_{n-1}} = i_{k_{n-1}}$, to get
\begin{align*}
      \sum_{j=1}^{k_{n-1}}
               \E[ \Delta_{n-1,\ell_j}] &= \sum_{j=1}^{k_{n-1}}
              \sum_{1\le i_1< i_2 < \cdots< i_{k_{n-1}} \le \tau_{n-1}}
              \E\big[ \Delta_{n-1,\ell_j}\given d_1 =i_1, \ldots
                       d_{k_{n-1}} = i_{k_{n-1}}\big]\\
            &\qquad\qquad \qquad\qquad {} \times \prob(d_1 =i_1, \ldots,
                       d_{k_{n-1}} = i_{k_{n-1}}).
\end{align*}
The subsets of size $k_{n-1}$ latches that appear in a sample of vertices
from $G_{n-1}$ are all equally likely, and we get
\begin{align*}
       &\sum_{j=1}^{k_{n-1}}
               \E[\Delta_{n-1,\ell_j}] \\
               &\qquad =
         \frac 1 {{{\tau_{n-1}} \choose {k_{n-1}}}}\sum_{j=1}^{k_{n-1}}
            \E\Big[ \Big(\sum_{1\le i_1< i_2 < \cdots < i_{k_{n-1}} \le \tau_{n-1}}   \Delta_{n-1,\ell_j}\Big)\, \big |\, \ell_1 =i_1, \ldots
                       \ell_{k_{n-1}}= i_{k_{n-1}}\Big].
\end{align*}
Let us write out the inner sum in expanded form:
\begin{align*}
        &  \Delta_{n-1,1} +   \Delta_{n-1,2} +\cdots + \Delta_{n-1,k_{n-1}} \\
        & \qquad {} +  \Delta_{n-1,1} + \Delta_{n-1,2} +\cdots
                 + \Delta_{n-1,k_{n-1}-1} +
                 \Delta_{n-1,k_{n-1}+1}\\
             &  \qquad\ \, {}  \vdots\\
             & \qquad {} +  \Delta_{n-1,\tau_{n-1} - k_{n-1}+1}
                +  \Delta_{n-1,\tau_{n-1} - k_{n-1}+2}+\cdots
                 + \Delta_{n-1,\tau_{n-1}}.
\end{align*}
Upon a reorganization collecting similar terms, we get
$$ {\tau_{n-1 -1} \choose k_{n-1}-1}
       (\Delta_{n-1,1} +   \Delta_{n-1,2} + \cdots + \Delta_{n-1,\tau_{n-1}})
          = {\tau_{n-1 -1} \choose k_{n-1}-1} T_{n-1}.$$
Plugging this expression in the expectation, we proceed to
$$\sum_{j=1}^{k_{n-1}}
               \E[\Delta_{n-1,\ell_j}] =
                 \frac {{\tau_{n-1 -1} \choose k_{n-1}-1}}
                     {{\tau_{n-1} \choose k_{n-1}}}\, \E[T_{n-1}] = \frac {k_{n-1}}
                     {{\tau_{n-1}}} \, \E[T_{n-1}].$$
\end{proof}
\begin{theorem}
\label{Theo:TPL}
Suppose $\{k_n\}_{n=0}^\infty$ is a building sequence of the family of 
graphs $\{G_n\}_{n=1}^\infty$, starting from a seed of total path length $T_0$. Let $T_n$ be the total path length
after $n$ steps of evolution according to the building sequence.
We have
$$\E[T_n] = T_0\tau_n\Big(\sum_{i=1}^n  \frac {k_{i-1}} {\tau_i}
    + \frac 1 {\tau_0}\Big). $$
\end{theorem}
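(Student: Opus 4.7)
My plan is to reduce the proof to a direct application of the linear recurrence solution formula~(\ref{Eq:standardsol}). Starting from~(\ref{Eq:Ln}) and substituting the conclusion of Lemma~\ref{Lm:TPL} for the sum of depths at the latches, I obtain
\begin{equation*}
\E[T_n] = \E[T_{n-1}] + T_0 k_{n-1} + (\tau_0-1)\,\frac{k_{n-1}}{\tau_{n-1}}\,\E[T_{n-1}].
\end{equation*}
The key observation is that the coefficient in front of $\E[T_{n-1}]$ simplifies cleanly: using~(\ref{Eq:taun}),
\begin{equation*}
1 + \frac{(\tau_0-1)k_{n-1}}{\tau_{n-1}} = \frac{\tau_{n-1} + (\tau_0-1)k_{n-1}}{\tau_{n-1}} = \frac{\tau_n}{\tau_{n-1}}.
\end{equation*}
Thus the recurrence collapses to the compact first-order linear form
\begin{equation*}
\E[T_n] = \frac{\tau_n}{\tau_{n-1}}\,\E[T_{n-1}] + T_0 k_{n-1}.
\end{equation*}

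Next, I apply~(\ref{Eq:standardsol}) with $g_n = \tau_n/\tau_{n-1}$ and $h_n = T_0 k_{n-1}$, using the initial value $\E[T_0] = T_0$. The products telescope:
\begin{equation*}
\prod_{j=i+1}^n \frac{\tau_j}{\tau_{j-1}} = \frac{\tau_n}{\tau_i}, \qquad \prod_{j=1}^n \frac{\tau_j}{\tau_{j-1}} = \frac{\tau_n}{\tau_0}.
\end{equation*}
Substituting these and factoring $T_0 \tau_n$ out of the resulting expression yields exactly the claimed formula.

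The argument has no real obstacle; the only substantive step is the algebraic identification of the coefficient as $\tau_n/\tau_{n-1}$, which is what makes the products telescope. Once that simplification is in hand, the remainder is a mechanical invocation of the prepared formula~(\ref{Eq:standardsol}).
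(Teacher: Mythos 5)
Your proposal is correct and follows essentially the same route as the paper: both combine recurrence~(\ref{Eq:Ln}) with Lemma~\ref{Lm:TPL}, invoke the standard linear-recurrence solution~(\ref{Eq:standardsol}), and use~(\ref{Eq:taun}) to telescope the products into ratios $\tau_j/\tau_{j-1}$. The only (immaterial) difference is ordering---you simplify the coefficient to $\tau_n/\tau_{n-1}$ before applying~(\ref{Eq:standardsol}), while the paper applies the formula first and telescopes afterward.
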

\begin{proof}
By Lemma~\ref{Lm:TPL} and the recurrence~(\ref{Eq:Ln}), we have a recurrence
for the average total path length:
\begin{align*}
\E[T_n] &= \E[T_{n-1}] +  T_0 k_{n-1}+ \frac {(\tau_0-1) k_{n-1}} {\tau_{n-1}} \, \E[T_{n-1}] \\
                  &= \Big(1 + \frac {(\tau_0-1)  k_{n-1}} {\tau_{n-1}}
                  \Big)\E[T_{n-1}] + T_0  k_{n-1}.
\end{align*}
Again, the recurrence is of the standard form~(\ref{Eq:standard})
with the solution~(\ref{Eq:standardsol}). In the specific case at hand,
this solution is
$$\E[T_n] = T_0\sum_{i=1}^n  k_{i-1}\prod_{j=i+1}^n \Big(1 + \frac {(\tau_0-1){k_{j-1}} } {\tau_{j-1}}\Big)
    +T_0 \prod_{j=1}^n \Big(1 + \frac {(\tau_0 -1)k_{j-1}} {\tau_{j-1}}\Big). $$
The recurrence~(\ref{Eq:taun}) on the order of the graph simplifies
the solution into telescopic products
$$\E[T_n] = T_0\sum_{i=1}^n  k_{i-1}\prod_{j=i+1}^n \frac
{\tau_j} {\tau_{j-1}}
    +T_0 \prod_{j=1}^n \frac {\tau_j} {\tau_{j-1}}
       = T_0 \tau_n \Bigl(\sum_{i=1}^n  \frac{k_{i-1}}{\tau_i}
    + \frac 1 {\tau_0}\Bigr). $$
\end{proof}
\subsection{Average depth}
Theorem~\ref{Theo:TPL} provides a benchmark for the calculation of
the average depth. Let the depth of a randomly selected node in the $n$th
network be $D_n$.
\begin{corollary}
$$\E[D_n]
       = T_0 \sum_{i=1}^n  \frac{k_{i-1}}{\tau_i}
    + D_0. $$
\end{corollary}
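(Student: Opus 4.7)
The plan is straightforward: relate the average depth to the average total path length, then invoke Theorem~\ref{Theo:TPL} directly. Since $D_n$ is the depth of a \emph{uniformly} selected node $I$ in $G_n$, and the order $\tau_n$ is deterministic once the building sequence is fixed, we can condition on $I$ being independent of the graph structure and write
$$\E[D_n] = \E[\Delta_{n,I}] = \frac{1}{\tau_n}\sum_{i=1}^{\tau_n} \E[\Delta_{n,i}] = \frac{\E[T_n]}{\tau_n}.$$

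Next, I would substitute the closed form from Theorem~\ref{Theo:TPL}, namely
$$\E[T_n] = T_0 \tau_n \Bigl(\sum_{i=1}^n \frac{k_{i-1}}{\tau_i} + \frac{1}{\tau_0}\Bigr),$$
so that dividing by $\tau_n$ cancels the overall factor and yields
$$\E[D_n] = T_0 \sum_{i=1}^n \frac{k_{i-1}}{\tau_i} + \frac{T_0}{\tau_0}.$$

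To finish, I would identify $T_0/\tau_0$ with $D_0$. By the same averaging principle applied at time $n=0$, the depth of a uniformly chosen node in the (deterministic) seed satisfies $D_0 = T_0/\tau_0$, which converts the constant term into the form stated in the corollary.

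There is no real obstacle here; the argument is essentially bookkeeping, with the only subtle point being the justification that $\E[D_n] = \E[T_n]/\tau_n$. This relies on $\tau_n$ being deterministic and on the selection of the reference node being independent of $G_n$, both of which are built into the model's setup.
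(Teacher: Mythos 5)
Your proposal is correct and follows essentially the same route as the paper: condition on (or exploit uniformity of) the node selection to get $\E[D_n] = \E[T_n]/\tau_n$, which is valid because $\tau_n$ is deterministic, and then substitute the closed form from Theorem~\ref{Theo:TPL}. Your explicit identification $D_0 = T_0/\tau_0$ is a small point the paper leaves implicit, but it is the same argument.
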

\begin{proof}
Given a specific development leading to $G_n$,
the average
depth in that graph is
$$\E\big[D_n \given G_{n}\big] = \frac {\Delta_{n,1} +\cdots + \Delta_{n,\tau_n}} {\tau_n} = \frac {T_n}{\tau_n}.$$
Upon taking expectation, it
follows that $\E\big[D_n] = \E[T_n] / \tau_n$. The form given
in the statement ensues from Theorem~\ref{Theo:TPL}.
\end{proof}
\begin{corollary}
\label{Cor:Dn}
Under the regularity conditions {\rm (R1)--(R3)}, we have the asymptotic equivalent
$$\E[D_n]
       = \frac {a T_0}{\tau_0 -1 + ab} \, n + o(n), \qquad \mbox{as \ } n \to \infty. $$
\end{corollary}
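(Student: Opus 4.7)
The plan is to extract the leading order directly from the closed-form expression $\E[D_n] = T_0 \sum_{i=1}^n k_{i-1}/\tau_i + D_0$ established in the preceding corollary. The key observation is that the individual summands $k_{i-1}/\tau_i$ stabilize to a constant (thanks to the limits developed in Subsection~\ref{Subsec:useful}), after which a standard Ces\`aro averaging converts the sum into a term linear in $n$.

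First, I would invoke the useful limit $\tau_n/k_{n-1} \to \gamma = (\tau_0-1)/a + b$, valid when $a > 0$, which upon taking reciprocals gives $k_{i-1}/\tau_i \to a/(\tau_0-1+ab)$ as $i \to \infty$. In the boundary case $a=0$, the quantity $\tau_n/k_{n-1}$ diverges, so $k_{i-1}/\tau_i \to 0$, which is consistent with the unified expression $a/(\tau_0-1+ab) = 0$ at $a=0$. Thus, in either regime, the summand has a well-defined limit $L = a/(\tau_0-1+ab)$.

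Next, I would apply the classical Ces\`aro mean theorem: any sequence converging to a limit $L$ has its running arithmetic averages converging to $L$ as well. Applied to the summands $k_{i-1}/\tau_i$, this yields $\sum_{i=1}^n k_{i-1}/\tau_i = L n + o(n)$. Multiplying through by $T_0$ and absorbing the constant $D_0 = O(1)$ into the $o(n)$ remainder produces the stated asymptotic equivalent $\E[D_n] = aT_0 n/(\tau_0-1+ab) + o(n)$.

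The main obstacle is admittedly mild: it is to ensure that the $a=0$ degenerate case is handled in the same breath as $a>0$. One cannot literally manipulate $1/\gamma$ when $\gamma$ is formally infinite; the remedy is to phrase everything in terms of the reciprocal ratio $k_{i-1}/\tau_i$, whose limit is read off directly from conditions (R1)--(R3) and equals $0$ exactly when $a=0$. This gives a single, uniform argument that matches the vanishing of the right-hand side of the corollary in the degenerate regime, consistent with the phase transitions already highlighted for the local degree in Theorem~\ref{Theo:deg}.
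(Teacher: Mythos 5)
Your proposal is correct and follows essentially the route the paper intends: the paper states Corollary~\ref{Cor:Dn} without an explicit proof, letting it follow from the exact formula $\E[D_n] = T_0\sum_{i=1}^n k_{i-1}/\tau_i + D_0$ together with the limit $\tau_i/k_{i-1} \to \gamma$ from Subsection~\ref{Subsec:useful}, exactly as you do via Ces\`aro averaging. Your explicit handling of the degenerate case $a=0$ (working with $k_{i-1}/\tau_i \to 0$ rather than the formally infinite $\gamma$) is a welcome refinement of a point the paper glosses over.
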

\begin{remark}
Corollary~\ref{Cor:Dn} is more useful when
 $$\lim_{n\to \infty }\frac {k_n} {\sum_{i=0}^n k_i}  = a\not = 0.$$
When $a=0$, as in the case of trees for example, one needs to sharpen
the argument to find the leading asymptotic term, as we do in some specific cases
below.
\end{remark}
\subsection{Distances under specific building sequences}
At one extreme, there is 
 the sequence of least possible growth ($k_n= 1$).
At the other extreme, we have a take-all model 
    ($k_n = \tau_n$) in which
all the nodes of  
$G_n$
are taken as latches for 
$\tau_{n}$ copies of
the seed.

In the case of $k_n = k$, for fixed $k\in \mathbb N$, of nearly the least growth,
the  average depth is
$$\E[D_n]
       = T_0 \sum_{i=1}^n  \frac k {\tau_i}
    +D_0. $$
    The limit $a$ in regularity condition (R2) is 0, and  Corollary~\ref{Cor:Dn} only tells us that $\E[D_n] =o(n)$. However, we can
    sharpen the asymptotic equivalence from the specific
    construction of the case.

Here, we have $\tau_i = (\tau_0 - 1) i + \tau_0$, which gives
  $$\E[D_n]
       = \frac {T_0 k} {\tau_0-1}
           \sum_{i=1}^n  \frac 1{ i + \frac {\tau_0} {\tau_0-1}}
     + D_0. $$
     In terms of the generalized harmonic 
numbers\footnote{Customarily, $H_n (0)$ is denoted by $H_n$.}
     $$H_n(x) = \frac 1 {1+x} +  \frac 1 {2+x}  +\cdots+\frac 1 {n+x}, $$
     the depth in the near-least-growth is compactly expressed as
      $$\E[D_n]
       = \frac {T_0 k} {\tau_0-1} H_n\Big(\frac {\tau_0}{\tau_0-1}\Big)
     + D_0 \sim \frac {T_0 k} {\tau_0-1} \ln n, \qquad \mbox {as \ } n\to\infty.$$
 \begin{remark} The case $k=1$ and $\tau_0=2$ grows a recursive tree.
 The seed is a rooted tree on two vertices, in which $T_0=1$ and $D_0 = \frac 1 2$. In this case, the average depth becomes
 $$\E[D_n]
       =H_n(2)
     + \frac 1 2 = H_{n+2} - 1 \sim  \ln n, \qquad \mbox {as \ } n\to\infty, $$
     which recovers a known result~\cite{Smythe}.
 \end{remark}
\begin{remark}
    At the other end of the spectrum, there is the take-all model, in which
     $k_i = \tau_i$, 
    leading at step
    $n$ to a graph of order  $\tau_n = \tau_0^{n+1}$.
    Here, the limit $a$ is $(\tau_0-1)/\tau_0$ and the limit $b$ is 0. According
    to Corollary~\ref{Cor:Dn}, we have $\E[D_n] \sim D_0 n$,
    as $n\to\infty$.
This asymptotic estimate can be sharpened as the case is amenable to
exact calculation:
$$\E[D_n] = T_0\sum_{i=1}^n  \frac {\tau_{i-1}} {\tau_i}
    + D_0  = T_0\sum_{i=1}^n  \frac 1 {\tau_0}  + D_0= D_0 (n+1)   . $$
\end{remark}
\section{Eccentricity}
\label{Sec:ecc}
The eccentricity $C(v)$ of a node $v$ in a graph $\cal G$ 
is the distance between~$v$ and a vertex farthest from $v$ in $\cal G$. The eccentricity is instrumental in constructing a notion of the diameter of a graph (extreme distances). We use the 
eccentricity of the hook and the various latches selected in $G_{n-1}$ to determine the diameter of the graph $G_n$.

The eccentricity is technically defined as follows.
If $\cal Q$ is a path in a graph~$\cal G$, we denote its length by $|\cal Q|$ (the number of edges in it). There can be several paths joining two vertices~$u$ and $v$ in $\cal G$, and the distance between~$u$ and $v$, denoted by $d(u,v)$, is the length of the shortest such path. That is, with ${\cal P}(u,v)$ denoting the collection of paths between $u$ and $v$, the distance between these two nodes is given by
$$ d(u,v) = \min_{{\cal Q}\in {\cal P}(u,v)} |{\cal Q}|.$$
The eccentricity $C(v)$ of a vertex $v$ in a graph with vertex set $\cal V$ is:
$$C(v) =  \max_{u\in \cal V} d(v,u)
   = \max_{u\in \cal V}  \min_{{\cal Q}\in {\cal P}(v,u)} |{\cal Q}|.$$
For instance, the eccentricity in Figure~\ref{Fig:network} of the reference vertex of $G_0$
is~2, of the reference vertex in $G_1$ is~2 as well, but of the reference vertex in $G_2$ is~4 
and becomes 6 in $G_3$.
\subsection {Eccentricity of a node in $G_n$}
\label{Subsec:latch}
The $k_{n-1}$ nodes selected as latches from the graph $G_{n-1}= (V_{n-1}, {\cal E}_{n-1})$ are vertices that play a key role in designing the network at stage $n$ and onward and contribute significantly in determining the diameter of the graph at the next stage.

As a node's eccentricity changes over time, its value at step $n$ in $G_n$
may be different from its value at step $n-1$ in $G_{n-1}$.
We need an eccentricity notation reflecting the possible change over time.
For that we use $C_n(v)$ to speak of the eccentricity of a vertex $v$ in $G_n$.

If $v \in V_n$ is a vertex in a copy of $G_0$ latched at a vertex 
$\ell_i \in V_{n-1}$, we express that by saying $v \in V_0^{co_i}$, otherwise we say $v \in V_{n-1}$. We now introduce some notation: 
\begin{enumerate}
\item $\mathbb{L}_n = \{\ell_1, \ell_2, \cdots \ell_{k_n}\}$ is the set of latches selected in the graph~$G_n$ to produce the graph $G_{n+1}$.
\item  $\widetilde{C}_n(v) =  C_n(v) \given G_{n-1}, \mathbb{L}_{n-1}$. This is the {\em conditional} eccentricity $C_n(v)$ of the node $v$ in the graph $G_n$, given $G_{n-1}$ and the $k_{n-1}$ latches in it.
\item For any $v \in V_{n-1}$, we define $d_v^{\#} = \max_{\ell_j \in \mathbb{L}_{n-1}} d(v, \ell_j)$. So,  $d_v^{\#}$ is the maximum distance from $v$ to the nodes
in $\mathbb{L}_{n-1}$.
\end{enumerate}
Also, in what follows we use the notation $\indicator_{\cal C}$ to indicate a predicate (condition)~$\cal C$. So, it is 1, when $\cal C$ holds,
and is 0, otherwise.
\bigskip
\begin{theorem}
\label{Theo:eccentricity}
Suppose $\{k_n\}_{n=0}^\infty$ is a building sequence of the family of 
graphs $\{G_n\}_{n=1}^\infty$. Let $v$ be a node in the graph $G_n$. 
Conditional upon the choice of the latches $\ell_1, \ldots, \ell_{k_{n-1}}$ in  
$G_{n-1}$, 
the eccentricity $C_n(v)$ is given by
$$\widetilde{C}_n(v)= 
{\small
\begin{cases}
                            \max\Big\{C_{n-1}(v), d_v^{\#}+C_0(h)\Big\},
                                     &\mbox{if \ } v\in V_{n-1};\\
                                     \\
                                     \\
                         \max\Big \{d(v,h) + C_{n-1}(\ell_i),    
                                  \\
                   \qquad {}  \max\big\{C_0(v)  ,  \big(d(v,h) + d_{\ell_i}^{\#} + C_0(h)\big)\indicator_{\{k_{n-1} >1\}}\big\},
                            &\mbox{if \ } v \in V_0^{co_i}.
                         \end{cases}}
$$
\end{theorem}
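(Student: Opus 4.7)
The plan is to compute the farthest distance from $v$ in $G_n$ by partitioning the vertex set of $G_n$ according to where each vertex lives (in $V_{n-1}$ or in one of the freshly adjoined copies $V_0^{co_1},\ldots, V_0^{co_{k_{n-1}}}$), taking the maximum distance to $v$ within each block, and then taking the overall maximum. Once the latches $\ell_1,\ldots,\ell_{k_{n-1}}$ are fixed, the graph $G_n$ is the union of $G_{n-1}$ with the copies, where each copy is attached to $G_{n-1}$ only through the single cut vertex $\ell_i$ (the fused hook). The crucial structural observation is therefore: any path in $G_n$ that leaves the copy $V_0^{co_i}$ must pass through $\ell_i$, and any path in $G_n$ between two vertices of $V_{n-1}$ may be assumed to stay in $V_{n-1}$ (detouring through a copy and back doubles the portion of the path through $\ell_i$). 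In particular, $d_{G_n}$ restricted to $V_{n-1}$ coincides with $d_{G_{n-1}}$, and $d_{G_n}$ restricted to $V_0^{co_i}$ coincides with $d_{G_0}$.

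\medskip
\noindent\emph{Case $v\in V_{n-1}$.} Distances from $v$ to another vertex $u\in V_{n-1}$ are governed entirely by $G_{n-1}$, so the maximum over such $u$ is $C_{n-1}(v)$. For $u\in V_0^{co_j}$, the shortest path goes from $v$ to $\ell_j$ inside $G_{n-1}$ and then from $\ell_j$ (identified with the hook of the copy) to $u$ inside $V_0^{co_j}$, giving $d_{G_{n-1}}(v,\ell_j)+d_{G_0}(h,u')$ where $u'$ is the preimage of $u$ in the seed. Maximizing over $u$ inside one copy yields $d_{G_{n-1}}(v,\ell_j)+C_0(h)$, and then maximizing over $j$ gives $d_v^{\#}+C_0(h)$. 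The claimed formula for this case is the max of these two contributions.

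\medskip
\noindent\emph{Case $v\in V_0^{co_i}$.} Now $u$ falls into three subcases. If $u\in V_{n-1}$, every path from $v$ to $u$ exits the copy at $\ell_i$, so $d_{G_n}(v,u)=d(v,h)+d_{G_{n-1}}(\ell_i,u)$ (identifying the hook of the copy with $\ell_i$); maximizing over $u$ yields $d(v,h)+C_{n-1}(\ell_i)$. If $u$ lies in the same copy $V_0^{co_i}$, the shortest path stays in the copy, contributing at most $C_0(v)$. If $u\in V_0^{co_j}$ with $j\neq i$, which is possible only when $k_{n-1}>1$, the path must cross $\ell_i$, then travel from $\ell_i$ to $\ell_j$ in $G_{n-1}$, then from $\ell_j=\text{hook}$ of the $j$th copy out to $u$; this contributes at most $d(v,h)+d(\ell_i,\ell_j)+C_0(h)$, and maximizing over $j\neq i$ gives $d(v,h)+d_{\ell_i}^{\#}+C_0(h)$ (since $d(\ell_i,\ell_i)=0$ is already dominated inside the max defining $d_{\ell_i}^{\#}$). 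The indicator $\indicator_{\{k_{n-1}>1\}}$ encodes that this third subcase is vacuous when only a single copy is hooked at step $n$. Putting the three subcases together reproduces the stated formula.

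\medskip
\noindent\emph{Main obstacle.} The routine calculations are the case-by-case additions; the conceptual core is the cut-vertex argument showing that a shortest path between vertices in distinct blocks must transit through the unique latch of each block it visits, and hence that $d_{G_n}$ splits additively across the blocks. Once this is justified carefully (by observing that retracing a cut vertex only lengthens a walk, so every path can be shortened to one that enters each copy at most once, exiting via the same latch it entered), the rest is a clean enumeration of maxima. I would state this transit property as a short preliminary observation before launching into the two cases, and then finish by verifying the edge case $k_{n-1}=1$ to justify the indicator in the second case.
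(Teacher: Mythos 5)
Your proposal is correct and follows essentially the same route as the paper: the paper's proof is exactly this block decomposition, organized as a $2\times 2$ table of distances $d(v,u)$ according to whether $v$ and $u$ lie in $V_{n-1}$ or in a hooked copy, followed by maximization within each block. Your explicit cut-vertex (transit) observation is merely a careful justification of the additive distance formulas that the paper's table takes for granted, so the two arguments coincide in substance.
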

\begin{proof} 
The graph $G_n$ is obtained by attaching a copy of the seed $G_0$ at each of the latches $\ell_1, \ell_2, \cdots, \ell_{k_{n-1}}$ selected in the graph $G_{n-1}$.

We denote the vertex set of the $r$th copy of the seed, for
$r= 1, \ldots, k_{n-1}$, by $V_0^{\co_r}$.
We now compute the distance from a node $v$ to a vertex
$u$ in $G_n$ by considering the four cases:
\begin{center}
\begin{tabular}{c|c|c}
${}$ & $u \in V_{n-1}$  & $u \in V_0^{\co_j}$ \\
   \hline
 $ v\in V_{n-1}$ & $d(v,u)$ &$ d(v,\ell_j) + d(h, u)$ 
   \\
    \hline
$v \in V_0^{\co_i}$ & $ d(v, h) + d(\ell_i, u)$ 
   & $d(u,v) \indicator _{i=j} + (d(v,h)+d(\ell_i, \ell_j)+d(h,u))
    \indicator_{i\not =j}
$
\end{tabular}
\end{center}
\vskip 0.5cm
For a given $v\in V_n$, the maximum (over the range of $u$ and $j$) in each block is
\begin{center}
\begin{tabular} {c|c|c}
 &  $u \in V_{n-1}$  & $u$ in a copy \\
   \hline
 ${v\in V_{n-1}}$ & $C_{n-1}(v)$ &$ d_v^{\#} + C_0(h)$ \\
    \hline
$v\in V_0^{\co_i}$  & $d(v,h)+C_{n-1}(\ell_i)$ & $\max\big\{C_0(v)  ,  \big(d(v,h) + d_{\ell_i}^{\#} + C_0(h)\big)\indicator_{\{k_{n-1} > 1\}}\big\}$
\end{tabular}
\end{center}
\vskip 0.5cm
The result now follows.
\end{proof}
\begin{remark}
Suppose a vertex $\ell$ is chosen as a latch from $G_{n-1}${\color{red}.} 
From Theorem~\ref{Theo:eccentricity}, we pick up the top line
 and write
 $$\widetilde C_n(\ell) = \max \big\{C_{n-1}(\ell), \displaystyle d_\ell^{\#}+ C_0(h)\big\}.$$ 
If $k_{n-1} = 1 $, then $d_\ell^{\#}=0$, in which case we have $\widetilde C_n(\ell) = \max \big\{C_{n-1}(\ell), C_0(h)\big\}.$ 
\end{remark}
\section{Diameter of the graph $G_n$}
\label{Sec:diameter}
\newcommand\Russiand{\mbox{\foreignlanguage{russian}{D}}}

The diameter of a connected graph with vertex set $\cal V$ is the longest distance between any
two nodes in it~\cite{Chartrand}. That is, the diameter is the
maximum eccentricity,  $\max_{v\in \cal V} C(v)$. 
For example, the diameters of the graphs $G_0,G_1,G_2$ and $G_3$ 
in Figure~\ref{Fig:network} are respectively $2, 4, 6$ and $10$.

\bigskip
We now introduce some additional notation: 
\begin{enumerate}
\item  $\widetilde{\Russiand_{n}} =  \Russiand_{n} \given 
G_{n-1}, \mathbb{L}_{n-1}$. This is the {\em conditional} diameter $\Russiand_{n}$ 
of the graph~$G_n$ given~$G_{n-1}$ and the $k_{n-1}$ latches in $\mathbb{L}_{n-1}$ (see Subsection~\ref{Subsec:latch} for the definition of $\mathbb{L}_{n-1}$).
\item Only for $k_n > 1$, we define $q_{n} =  \max_{\ell, \tilde\ell \in \mathbb{L}_{n}} d(\ell, \tilde \ell) = \max_{\ell \in \mathbb{L}_{n}} \ell^{\#}$.
Thus,~$q_{n}$ computes the maximum distance between 
any two latches in~$G_{n}$.
\item $\alpha_{n} = \max_{\ell \in \mathbb{L}_{n}} C_{n}(\ell)$. 
Thus, $\alpha_{n}$ is the maximum eccentricity of a latch in $G_{n}$.
\end{enumerate}
\begin{theorem}
\label{Theo:diameter}
Suppose $\{k_n\}_{n=0}^\infty$ is a building sequence of the family of 
graphs $\{G_n\}_{n=1}^\infty$. The conditional diameter
$\widetilde{\Russiand_{n}} = \Russiand_n\given G_{n-1}, \mathbb{L}_{n-1} $ of a graph of age 
$n$ is given by
$$\widetilde{\Russiand_{n}}  = 
         \max \Big \{\Russiand_{n-1}, ~~\big(2C_0(h) + q_{n-1}\big) \indicator_{\{k_{n-1} > 1\}}, ~~ C_0(h) + \alpha_{n-1}  \Big\}. 
$$
\end{theorem}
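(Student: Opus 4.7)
The plan is to compute the conditional diameter $\widetilde{\Russiand_n} = \max_{v\in V_n} \widetilde{C}_n(v)$ by partitioning $V_n = V_{n-1} \cup V_0^{\co_1} \cup \cdots \cup V_0^{\co_{k_{n-1}}}$ and applying Theorem~\ref{Theo:eccentricity} block by block. In each block I take the maximum of the expression displayed in that theorem for $\widetilde{C}_n(v)$, then combine across the blocks.

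For $v \in V_{n-1}$, Theorem~\ref{Theo:eccentricity} gives $\widetilde{C}_n(v) = \max\{C_{n-1}(v),\, d_v^{\#}+C_0(h)\}$. Maximizing over $v\in V_{n-1}$, the first argument yields $\Russiand_{n-1}$. For the second, I swap the order of the two maxima to obtain
$$\max_{v \in V_{n-1}} d_v^{\#} = \max_{v \in V_{n-1}} \max_{\ell_j \in \mathbb{L}_{n-1}} d(v,\ell_j) = \max_{\ell_j \in \mathbb{L}_{n-1}} C_{n-1}(\ell_j) = \alpha_{n-1},$$
so this block contributes $\max\{\Russiand_{n-1},\, C_0(h)+\alpha_{n-1}\}$.

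For a vertex in an attached copy $V_0^{\co_i}$, the identity $\max_{v\in V_0^{\co_i}} d(v,h) = C_0(h)$ feeds into each of the three terms inside the $\max$ in the second case of Theorem~\ref{Theo:eccentricity}. Taking maxima over $v$ and over $i$, the first term gives $C_0(h)+\max_i C_{n-1}(\ell_i) = C_0(h)+\alpha_{n-1}$; the $C_0(v)$ term gives $\Russiand_0$; and the third term, nonzero only when $k_{n-1}>1$, gives $2C_0(h)+\max_i d_{\ell_i}^{\#} = 2C_0(h)+q_{n-1}$. Combining with the previous block yields
$$\widetilde{\Russiand_n} = \max\Big\{\Russiand_{n-1},\, \Russiand_0,\, C_0(h)+\alpha_{n-1},\, \big(2C_0(h)+q_{n-1}\big)\indicator_{\{k_{n-1}>1\}}\Big\}.$$

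The step I would expect to be the main subtlety is justifying that the $\Russiand_0$ term can be absorbed, i.e.\ $\Russiand_0 \le \Russiand_{n-1}$. A pendant-type argument handles it: each adjoined copy meets the rest of the graph only at its latch, so any path in $G_{n-1}$ between two vertices of $V_0$ that ventures into an attached copy must enter and leave through a single latch vertex and so cannot be shorter than the corresponding $G_0$-path. By induction on $n$, the distance between any pair of vertices of $V_0 \subseteq V_{n-1}$ is the same in $G_{n-1}$ as in $G_0$. Hence $\Russiand_{n-1}\ge \Russiand_0$, the $\Russiand_0$ term drops, and the formula in the statement follows.
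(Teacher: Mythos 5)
Your proof is correct, but it reaches the formula by a genuinely different route than the paper. You derive the diameter as a corollary of Theorem~\ref{Theo:eccentricity}: partition $V_n$ into $V_{n-1}, V_0^{\co_1},\ldots,V_0^{\co_{k_{n-1}}}$, maximize the conditional eccentricity $\widetilde{C}_n(v)$ block by block (your exchanges of maxima, e.g.\ $\max_{v\in V_{n-1}} d_v^{\#}=\alpha_{n-1}$ and $\max_i d_{\ell_i}^{\#}=q_{n-1}$, are all valid), and combine. The paper instead re-runs a direct path argument, never invoking Theorem~\ref{Theo:eccentricity}: it classifies the endpoint pairs that could realize a distance exceeding $\Russiand_{n-1}$ (two vertices in distinct new copies, giving $2C_0(h)+q_{n-1}$ when $k_{n-1}>1$; an old vertex and a vertex in a new copy, giving $C_0(h)+\alpha_{n-1}$) and takes the maximum of the three possibilities. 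Your route surfaces a term the paper's proof never mentions: pairs of vertices inside the \emph{same} adjoined copy, which contribute $\Russiand_0$ to the maximum. You correctly identify that absorbing this term is the real obligation, and your justification of $\Russiand_{n-1}\ge \Russiand_0$ is sound: each copy meets the rest of the graph only at the fused hook/latch, which is a cut vertex, so hooking never shortens distances between pre-existing vertices, and by induction $V_0$ sits isometrically inside $G_{n-1}$. The paper's proof silently uses this same monotonicity when it omits the same-copy case, so your write-up is, if anything, more complete. What the paper's approach buys is brevity and independence from the eccentricity theorem; what yours buys is rigor and economy of ideas, since Theorem~\ref{Theo:diameter} becomes a formal consequence of Theorem~\ref{Theo:eccentricity} with every case accounted for.
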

\begin{proof}
The (conditional) diameter $\widetilde{\Russiand_{n}}$ of $G_n$ 
may remain the same 
 as the diameter $\Russiand_{n-1}$ of $G_{n-1}$,\footnote{In the graph $G_2$ in Figure~\ref{Fig:network}, if we pick the three latches at distances 2,3,4 from the top vertex,  the diameter of the graph  so obtained in step 3 will be equal 
 to $\Russiand_2$,  the diameter of~$G_2$.}
unless we can find longer paths in $G_n$. The latter case arises, 
 if  
\begin{itemize}
\item [(a)] There
is a pair~$x$ and $y$ 
of latches in $G_{n-1}$, and a pair of vertices (say~$u$  in the copy latched at $x$
and $v$ in the copy latched at $y$), such that
$d(u,x) + d(x,y) + d(y, v)>\Russiand_{n-1}$. The case can be, only if 
$k_{n-1} > 1$.
The longest such distance is obtained  by maximizing over $x,y,u$ and $v$ to obtain
$2C_0(h) + q_{n-1}$.\footnote{This situation occurs in the graph $G_3$ in Figure~\ref{Fig:network}.}
\item [(b)] 
Or, we can find a vertex
$u$ far enough from a latch $\ell$ in $G_{n-1}$ and another vertex $v$ in the copy latched at $\ell$ such that $d(u,\ell) + d(\ell, v)> \Russiand_{n-1}$.
The longest such distance is obtained by maximizing over $\ell, u$ and $v$
to obtain  $ \alpha_{n-1}+C_0(h) $.
\end{itemize}
The longest distance in the graph is the maximum of the three possibilities 
discussed.
\end{proof}
\begin{remark} 
\label{Rem:diam}
Consider the case where, at stage $n$ (for each $n\ge 1$), we pick among the $k_{n-1}$ latches two, say $\ell, \tilde \ell$ in $G_{n-1}$, such that $d(\ell, \tilde \ell)$ is the diameter of $G_{n-1}$. Note that this selection mechanism is no longer random in the sense discussed in all the preceding sections.
Let us call the diameter of the graph so constructed $\Russiand_{n-1}^*$. This is only possible if $k_{n-1} > 1$, for each~$n$. By arguments
similar to what we used in the proof of Theorem~\ref{Theo:diameter}, we get $\Russiand_{n}^* = 2C_0(h) + \Russiand_{n-1}^*$. Unwinding we get $\Russiand_{n}^* = 2n C_0(h) + \Russiand_{0}$.
\end{remark}
Remark~\ref{Rem:diam} shows that,
under this special hooking mechanism,
 the diameter~$\Russiand_{n}$ at step $n$ only requires the knowledge of the seed graph and $n$. It does not take into consideration how many latches were picked at stages $1$ through $n-1$ as long as there are two latches 
 $\ell, \tilde \ell$ picked at each stage such that $d(\ell, \tilde \ell)$ is maximum. 
\section*{Acknowledgment}
The authors would like to sincerely thank Dr.~Shaimaa M.\ Abd-Elaal,
Egyptian Ministry of Health and Population, for many
fruitful discussions that gave them a perspective on applications 
of the model proposed.
\section*{Competing interests}
The authors declare none.

\end{document}